\newtheorem{thm}{Theorem}[section]
\newtheorem{lem}[thm]{Lemma}
\newtheorem{prop}[thm]{Proposition}
\theoremstyle{definition}
\newtheorem{defi}[thm]{Definition}
\newtheorem{xrem}[thm]{Remark}
\newtheorem{exm}[thm]{Example}
\newtheorem{question}[thm]{Question}
\DeclareMathOperator{\mult}{{mult}}
\DeclareMathOperator{\rank}{{rank}}
\DeclareMathOperator{\End}{{End}}
\DeclareMathOperator{\Bl}{{Bl}}
\begin{document}
\baselineskip=16pt

\title[Some results on Seshadri constants of vector bundles]{Some results on Seshadri constants of vector bundles}

\author[I. Biswas]{Indranil Biswas}

\address{Department of Mathematics, Shiv Nadar University, NH91, Tehsil
Dadri, Greater Noida, Uttar Pradesh 201314, India}

\email{indranil.biswas@snu.edu.in, indranil29@gmail.com}

\author[K. Hanumanthu]{Krishna Hanumanthu}

\address{Chennai Mathematical Institute, H1 SIPCOT IT Park, Siruseri, Kelambakkam 603103,
India}

\email{krishna@cmi.ac.in}

\author[S. Misra]{Snehajit Misra}

\address{Chennai Mathematical Institute, H1 SIPCOT IT Park, Siruseri, Kelambakkam 603103, 
India}

\email{snehajitm@cmi.ac.in}

\subjclass[2010]{14C20, 14J60}

\keywords{Seshadri constant, vector bundle, nef cone, ample cone, multiplicity}

\date{4 August, 2023}

\begin{abstract}
We study Seshadri constants of certain ample vector bundles on projective varieties. Our main 
motivation is the following question:\, Under what conditions are the Seshadri constants of ample vector 
bundles at least 1 at all points of the variety. We exhibit some conditions under which this 
question has an affirmative answer. We primarily consider ample bundles on projective spaces 
and Hirzebruch surfaces. We also show that Seshadri constants of ample vector bundles can be 
arbitrarily small.
\end{abstract}

\maketitle

\section{Introduction}\label{sect1}

Seshadri constants of line bundles on projective varieties were defined by Demailly \cite{Dem90} 
in 1990 and since then they have been an active area of research around positivity in algebraic 
geometry. This notion has been generalized to vector bundles by Hacon \cite{Hac} and recently
Seshadri constants for vector bundles have
been studied in depth by Fulger and Murayama \cite{FM21}. While a lot is known for Seshadri 
constants of line bundles, especially on surfaces, not much is known about
Seshadri constants of vector bundles on varieties of
dimension at least two. For instance we do not know how to compute Seshadri constants for arbitrary 
vector bundles on the projective plane $\mathbb{P}^2$. For vector bundles on curves, the picture 
is much clearer.

The usual notions of positivity of line bundles have been generalized to vector bundles, usually 
by considering the tautological line bundle on the associate projective bundle. For instance a 
vector bundle $E$ on a variety $X$ is called ample if the tautological line bundle 
$\mathcal{O}_{\mathbb{P}(E)}(1)$ on $\mathbb{P}(E)$ is ample. It is natural to define and study 
Seshadri constants of vector bundles following this approach.

Let $X$ be a projective variety and $E$ a nef
vector bundle on $X$.
The Seshadri constant $\varepsilon(X,\,E,\,x)$ of $E$ at a closed point $x \,\in\, X$ is
defined by Hacon \cite{Hac}.

If $L$ is an ample and globally generated line bundle on a projective variety $X$, then it is
easy to show that $\varepsilon(X,\,L,\,x)\,\ge\, 1$ for all $x\,\in\, X$. On the other hand, 
Miranda has shown that the Seshadri constants can be arbitrary small for ample line bundles 
(which are, necessarily, not globally generated). However in a fundamental paper, Ein and 
Lazarsfeld, \cite{EL93}, showed that
$$
\varepsilon(X,\,L,\,x)\,\ge \,1
$$
for \textit{very general points} $x$ in a smooth projective irreducible surface $X$ when $L$ is ample.

The above behaviour does not carry over to the case of vector bundles of higher rank. For vector 
bundles over curves, one still has some good bounds.
For an ample vector bundle $E$ on a smooth curve $X$, Hacon
\cite[Theorem 3.1]{Hac} proved that 
\begin{eqnarray}\label{hacon}
\varepsilon(X,\,E,\,x)\, =\, \mu_{\min}(E)
\end{eqnarray}
for all $x\,\in\, X$.
This implies that $$\varepsilon(X,E,x)\,\,\ge\,\, \frac{1}{\text{rank}(E)}$$ for all $x\,\in\, X$.
Moreover, for every possible rank there are examples where this inequality is actually an
equality. On the 
other hand, $\varepsilon(X,\,E,\,x)\,\ge \,1$ for any ample and \textit{globally generated} vector 
bundle $E$ on a smooth projective curve $X$ and all points $x \,\in\, X$ \cite[Proposition 3.2]{Hac}.

Much less is known when the dimension of the base is at least two. Hacon \cite{Hac96} gave examples of ample and 
globally generated vector bundles $E$ on K3 surfaces for which $\varepsilon(X,E,\,x) \,<\, 1$ for all
points $x$. Hence the Ein--Lazarsfeld inequality
\begin{equation}\label{iel}
\varepsilon(X,\,E,\,x)\,\ge\, 1
\end{equation}
does not hold, in 
general, for ample and globally generated vector bundles of rank at least two. It however holds 
for \textit{very ample} vector bundles; see \cite{BSS96}.

So it is reasonable to ask when the Ein--Lazarsfeld inequality \eqref{iel} holds for ample vector bundles of 
arbitrary rank on arbitrary projective varieties.

Most of the results in this paper are motivated by the above question. There does not seem to be a 
general method to approach this question, so we focus on specific cases and consider vector 
bundles satisfying specific conditions.

In Section \ref{sect2}, we recall some preliminaries that are used in later sections. In 
Section \ref{sect3}, we prove \eqref{iel} for vector bundles satisfying a certain 
hypothesis and give several examples. In Section \ref{sect4}, we first show that the Seshadri 
constants of ample vector bundles can be arbitrarily small. We also give lower bounds for the
Seshadri constants for vector bundles satisfying some conditions. In Section \ref{sect5}, we 
consider some vector bundles on Hirzebruch surfaces.

\section{Preliminaries}\label{sect2}

We work throughout over the field of complex numbers. All the varieties are assumed to be irreducible and reduced.

\subsection{Seshadri constants of vector bundles} Let $X$ be an irreducible complex projective variety and let $E$ be a nef vector
bundle on $X$. We fix a closed point $x\in X$.

Let us consider the following Cartesian diagram:
\begin{center}
\begin{tikzcd}
\mathbb{P}(E)\times_X \widetilde{X}_x = \mathbb{P}\bigl(\rho_x^{\star}(E)\bigr) \arrow[r, "\widetilde{\rho_x}"] \arrow[d, "\widetilde{\pi}"]
 & \mathbb{P}(E) \arrow[d,"\pi"]\\
 \widetilde{X}_x \arrow[r, "\rho_x" ]
 & X
\end{tikzcd}
\end{center}
Here $\pi\,:\, \mathbb{P}(E)\,\longrightarrow\, X$ is the projective bundle over $X$ associated to
$E$ and $$\rho_x \,:\, \widetilde{X}_x \,= \,\Bl_x(X)\,\longrightarrow\, X$$
is the blow up of $X$ at $x$.
Let $\widetilde{\xi}_x$ be the numerical equivalence class of the tautological bundle
$\mathcal{O}_{\mathbb{P}(\rho^{\star}_xE)}(1)$, and $\widetilde{E}_x\,:=\,
\widetilde{\rho_x}^{-1}(F_x)$, where $F_x$ is the class of the fiber of the map $\pi$ over
the point $x$.
Then the \textit{Seshadri constant of $E$ at} $x\,\in\, X$ is defined as
\begin{align*}
\varepsilon(X,\,E,\,x) \,:=\, \sup\,\Bigl\{ \lambda \,\in\, \mathbb{R}_{>0}
\,\mid\,\, \widetilde{\xi}_x - \lambda\widetilde{E}_x\,\, \text{ is nef} \Bigr\}.
\end{align*}
In \cite{Hac} it is shown that if $E$ is an ample vector bundle on a smooth irreducible projective
curve $C$, then $\varepsilon(C,\,E,\,x) = \mu_{\min}(E)$ for all $x\,\in\, C$, where
$\mu_{\min}(E)$ denotes the minimal slope among all quotient bundles of $E$.
There is an equivalent definition of Seshadri constants for vector bundles due to
Fulger and Murayama in \cite{FM21}. Let $\mathcal{C}_{\pi, x}$ denote the set of all irreducible
curves $C\,\subset\, \mathbb{P}(E)$ that intersect $\pi^{-1}(x)$, but not contained in
$\pi^{-1}(x)$. Note that the curves in $\mathcal{C}_{\pi,x}$ are precisely the irreducible curves $C$ in $\mathbb{P}(E)$ such that $\mult_x\pi_*C > 0$. Then the Seshadri constants of $E$ at $x$
has the following expression:
$$\varepsilon(X,\,E,\,x)\, =\, \inf\limits_{C\in \mathcal{C}_{\pi,x}}\,
\left\{\frac{\xi\cdot C}{\mult_x\pi_*C}\right\},$$
where the infimum is taken over the set $\mathcal{C}_{\pi,x}$.

Seshadri constants can also be computed by restriction to curves \cite[Corollary 3.21]{FM21}.
This description, which will be used here, is recalled below.

Let $C\,\subset\, X$ be an irreducible curve in $X$, possibly singular. Take its
normalization $\alpha\, :\, C'\, \longrightarrow\, C$. Let $\nu \,:\, C'
\,\longrightarrow \, X$ denote the composition of $\alpha$
with the inclusion map $C\,\hookrightarrow\, X$. Then
\begin{eqnarray}\label{curves}
\varepsilon(X,\,E,\,x)\,=\, \inf\limits_{x\in C \subset X}\,
\left\{\frac{\mu_{\min}(\nu^{\star}E)}{\mult_xC}\right\},
\end{eqnarray}
where the infimum is taken over all irreducible curves $C\,\subset\, X$ passing through $x\,\in\,X$
(see \cite[Corollary 3.21]{FM21}).
\subsection{Semistability and Harder-Narasimhan filtration}
Let $X$ be a smooth complex projective variety of dimension $n$ with a fixed ample line bundle 
$H$ on it. For a non-zero torsion-free coherent sheaf $\mathcal{G}$ of rank $r$ on $X$, its degree with
respect to $H$, denoted by $\deg_H(\mathcal{G})$, is defined as follows:
$$\deg_H(\mathcal{G}) \,=\,c_1(\mathcal{G})\cdot H^{n-1},$$
and the $H$-slope of $\mathcal{G}$ is defined as 
\begin{align*}
\mu_H(\mathcal{G}) \,:=\, \frac{c_1(\mathcal{G})\cdot H^{n-1}}{r} \,\in\, \mathbb{Q}.
\end{align*}
A vector bundle $E$ on $X$ is said to be $H$-semistable (respectively, $H$-stable) if $\mu_H(\mathcal{G}) \,\leq\, \mu_H(E)$
(respectively, $\mu_H(\mathcal{G}) \,<\, \mu_H(E)$) for 
all subsheaves $\mathcal{G}\, \subsetneq\, E$ such that $E/\mathcal{G}$ is torsion free. 
A vector bundle $E$ on $X$ is called $H$-unstable if it is not $H$-semistable. For every vector bundle $E$ on $X$, there is a unique
positive integer $l$ and a unique filtration
\begin{align*}
 0 \,=\, E_l \,\subsetneq\, E_{l-1} \,\subsetneq\, E_{l-2} \,\subsetneq\,\cdots\,\subsetneq \,E_{1} \,\subsetneq\, E_0 \,=\, E
\end{align*}
of subsheaves of $E$, called the \textit{Harder-Narasimhan filtration} of $E$, such that $E_i/E_{i+1}$ is an $H$-semistable torsion-free sheaf
for each $i \,\in\, \{0,\,1,\,2,\,\cdots,\,l-1\}$
and $$\mu_H\bigl(E_{l-1}/E_{l}\bigr) \,>\, \mu_H\bigl(E_{l-2}/E_{l-1}\bigr) \,>\,\cdots\,>\, \mu_H\bigl(E_{0}/E_{1}\bigr).$$
We define $Q_1\,:=\, E_{0}/E_{1}$ and $\mu_{\min}(E) \,:=\, \mu_H(Q_1) = \mu_H\bigl(E_{0}/E_{1}\bigr)$; also
define $\mu_{\max}(E) \,:=\, \mu_H (E_{l-1}).$

For a vector bundle $E$ of rank $r$ over $X$, the element $c_2\bigl(\End(E)\bigr) \in H^4(X,\mathbb{Q})$ is called the 
\textit{discriminant} of $E$. We recall the following result about semistable bundle with vanishing discriminant which we will use in 
Section \ref{sect4}:

\begin{thm}[{\cite[Theorem 1.2]{BB08}}]\label{thm1.1}
Let $E$ be a vector bundle of rank $r$ on a smooth complex projective variety $X$, and
let $\pi : \mathbb{P}(E) \longrightarrow X$ be the projection. Then the following two are equivalent:
\begin{enumerate}
\item $E$ is semistable and $c_2\bigl(\End(E)\bigr) = 0$.

\item For every pair of the form $(\phi,C)$, where $C$ is a smooth projective curve and
$\phi : C \longrightarrow X$ is a non-constant morphism, $\phi^*(E)$ is semistable.
\end{enumerate}
\end{thm}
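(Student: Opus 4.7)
The statement is an equivalence between a global property of $E$ (semistability plus vanishing discriminant) and the property that every pullback to a smooth curve is semistable. I would prove the two implications separately. The direction $(1) \Rightarrow (2)$ goes through the theory of numerically flat bundles: semistability plus $c_2(\End E) = 0$ forces $\End E$ to be numerically flat, and numerical flatness is preserved under pullback. The direction $(2) \Rightarrow (1)$ combines the Mehta--Ramanathan restriction theorem (for $H$-semistability of $E$) with Bogomolov's inequality and a careful surface argument (for vanishing of $c_2(\End E)$).

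For $(2) \Rightarrow (1)$, I would first establish $H$-semistability. If $E$ were $H$-unstable with destabilizing subsheaf $F \subsetneq E$, then by Mehta--Ramanathan the restriction of $F$ to a generic smooth complete intersection curve $C = D_1 \cap \cdots \cap D_{n-1}$ with $D_i \in |mH|$ and $m \gg 0$ would still destabilize $E|_C$, contradicting (2) applied to the inclusion $C \hookrightarrow X$. Next, to obtain $c_2(\End E) = 0$, I would reduce to surfaces: for a generic complete intersection surface $S \subset X$ the identity $c_2(\End E) \cdot H^{n-2} = c_2(\End E|_S)$, combined with a variation of $H$, shows it suffices to prove $c_2(\End E|_S) = 0$ for all such $S$. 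On $S$, Bogomolov's inequality gives $c_2(\End E|_S) \geq 0$ (as $E|_S$ is semistable by the first step), and the task is to show that strict positivity would force the existence of a non-constant map from a smooth curve to $S$ along which the pullback of $E$ is unstable, contradicting (2).

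For $(1) \Rightarrow (2)$, the sheaf $\End E$ is semistable since $E$ is, has $c_1(\End E) = 0$ automatically, and $c_2(\End E) = 0$ by hypothesis. By the Kobayashi--Hitchin correspondence together with the Bogomolov--L\"ubke inequality, a semistable bundle on a smooth projective variety with vanishing $c_1$ and vanishing $c_2$ admits a Jordan--H\"older filtration whose quotients are flat stable bundles, and is therefore numerically flat in the sense of Demailly--Peternell--Schneider. Numerical flatness is preserved under pullback by any morphism, and on a smooth projective curve it coincides with polystability of degree zero. Hence $\phi^*(\End E) = \End(\phi^* E)$ is semistable on $C$, and the standard fact that semistability of $\End F$ implies semistability of $F$ (apply the destabilization argument to the natural inclusion of a Hom sheaf into $\End F$) completes the argument.

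\textbf{Main obstacle.} The hardest step is the vanishing of $c_2(\End E)$ in the direction $(2) \Rightarrow (1)$. Bogomolov's inequality only yields $c_2(\End E|_S) \geq 0$, so proving equality requires one to geometrically witness any positive discriminant by producing a specific curve along which semistability fails. This is the step where the full strength of hypothesis (2), as opposed to the weaker assumption of semistability under restriction to only generic complete intersection curves, is crucially invoked.
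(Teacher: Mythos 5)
The paper itself offers no proof of this statement: it is imported verbatim from \cite{BB08}, so there is nothing internal to compare your argument against. Judged on its own terms, your plan for $(1)\Rightarrow(2)$ follows the standard route and is essentially sound: $\End(E)$ is semistable with $c_1=0$ and $c_2=0$, hence by the Uhlenbeck--Yau/Simpson circle of results it is filtered with projectively flat quotients and is numerically flat in the sense of Demailly--Peternell--Schneider; numerical flatness is preserved under arbitrary pullback, and semistability of $\End(\phi^{*}E)$ forces semistability of $\phi^{*}E$. One small correction: on a smooth curve numerical flatness coincides with \emph{semistability} of degree zero, not polystability (a non-split extension of $\mathcal{O}_C$ by $\mathcal{O}_C$ is numerically flat but not polystable); fortunately semistability is all your argument needs.

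The genuine gap is exactly where you flag it, in $(2)\Rightarrow(1)$. Your plan for $c_2(\End E)=0$ is to reduce to a surface $S$, get $c_2(\End E|_S)\ge 0$ from Bogomolov, and then ``show that strict positivity would force the existence of a curve along which the pullback is unstable.'' No argument is offered for that last step, and it is essentially a restatement of the implication to be proved; there is no known direct construction of a destabilizing curve out of a positive discriminant (the Bogomolov/Langer restriction theorems point the other way: restrictions of semistable bundles to general high-degree curves stay semistable, so any such curve would have to be very special). The argument that actually works bypasses the surface reduction: hypothesis (2) implies that for every non-constant $\phi:C\to X$ the bundle $\phi^{*}\End(E)=\End(\phi^{*}E)$ is semistable of degree $0$, hence has $\mu_{\min}\ge 0$; therefore $\End(E)$ is nef, and being self-dual it is numerically flat. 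The Demailly--Peternell--Schneider theorem then gives the vanishing of all Chern classes of $\End(E)$ in $H^{*}(X,\mathbb{R})$, in particular $c_2(\End E)=0$. Note also that your surface reduction could at best yield the vanishing of the numbers $c_2(\End E)\cdot H_1\cdots H_{n-2}$, which does not determine the class $c_2(\End E)\in H^{4}(X,\mathbb{Q})$ once $\dim X\ge 3$, whereas the statement asserts the full cohomological vanishing. The Mehta--Ramanathan step for semistability of $E$ is fine, and is in fact overkill: for a fixed destabilizing subsheaf, a general high-degree complete intersection curve already witnesses instability by a direct degree computation.
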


Since the semistability of a vector bundle on a smooth curve does not depend on the fixed ample line bundle on $C$, the condition (2) in 
the above Theorem \ref{thm1.1} implies that the semistability of a vector bundle $E$ with $c_2\bigl(\End(E)\bigr) = 0$ is independent of 
the fixed ample bundle $H$. We have the following lemma which is an easy applications of Theorem \ref{thm1.1}.

\begin{lem}\label{lem2.2}
 Let $\psi : X \longrightarrow Y$ be a morphism between two smooth complex projective varieties and let $E$ be a semistable bundle on $Y$ with $c_2\bigl(\End(E)\bigr) = 0$. Then the twisted pullback bundle $\psi^*(E)\otimes L$ is also semistable with $c_2\bigl(\End(\psi^*(E)\otimes L)\bigr) = 0$ for any line bundle $L$ on $X$.
\end{lem}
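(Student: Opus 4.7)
The plan is to apply Theorem \ref{thm1.1} in both directions, using the restriction-to-curves characterization to transport the property from $E$ to $\psi^*(E)\otimes L$. Since $X$ is smooth and $\psi^*(E)\otimes L$ is a vector bundle on $X$, the theorem is applicable, so it suffices to check that for every non-constant morphism $\phi : C \longrightarrow X$ from a smooth projective curve $C$, the pulled-back bundle $\phi^*\bigl(\psi^*(E)\otimes L\bigr)$ is semistable on $C$.

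First I would use the identification
\begin{align*}
\phi^*\bigl(\psi^*(E)\otimes L\bigr) \,\cong\, (\psi\circ\phi)^*(E) \otimes \phi^*(L)
\end{align*}
and split into two cases according to whether the composition $\psi\circ\phi : C \longrightarrow Y$ is constant or not. If $\psi\circ\phi$ is non-constant, then Theorem \ref{thm1.1} applied to $E$ (in the direction (1) $\Rightarrow$ (2)) gives that $(\psi\circ\phi)^*(E)$ is semistable on $C$; tensoring by the line bundle $\phi^*(L)$ just shifts every slope on $C$ by the constant $\deg \phi^*(L)$, so semistability is preserved. If instead $\psi\circ\phi$ is constant, then $(\psi\circ\phi)^*(E)$ is a trivial bundle $\mathcal{O}_C^{\oplus r}$, hence $\phi^*\bigl(\psi^*(E)\otimes L\bigr)\cong \phi^*(L)^{\oplus r}$, which is a direct sum of copies of a single line bundle and is therefore semistable on $C$.

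Having verified the curve-pullback criterion for every such $\phi$, I would then invoke Theorem \ref{thm1.1} in the reverse direction (2) $\Rightarrow$ (1) applied to the bundle $\psi^*(E)\otimes L$ on the smooth variety $X$ to conclude simultaneously that $\psi^*(E)\otimes L$ is semistable \emph{and} that $c_2\bigl(\End(\psi^*(E)\otimes L)\bigr) = 0$.

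I do not anticipate a serious obstacle: the only slightly delicate point is the constant-composition case, which is immediate since trivial bundles twisted by a line bundle are semistable. The elegance of the argument lies precisely in the fact that Theorem \ref{thm1.1} packages both conclusions (semistability and vanishing of the discriminant of the endomorphism bundle) into a single curve-wise condition, so one verification yields both parts of the lemma at once; no direct Chern-class manipulation for $c_2\bigl(\End(\psi^*(E)\otimes L)\bigr)$ is needed.
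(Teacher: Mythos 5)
Your proof is correct and follows exactly the route the paper intends: the paper gives no written proof, stating only that the lemma is an easy application of Theorem \ref{thm1.1}, and your argument (checking the curve-pullback criterion for $\psi^*(E)\otimes L$ via the two cases on $\psi\circ\phi$, then invoking the theorem in the reverse direction to get both conclusions at once) is precisely that application, with the constant-composition case handled properly.
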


The following lemma will be used in Section \ref{sect5}.

\begin{lem}[{\cite[Lemma 3.31]{FM21}}]\label{mu-min}
Let $$0\,\longrightarrow\, E'\,{\longrightarrow}\, E\, \longrightarrow \,E'' \,\longrightarrow\, 0$$ be a short exact sequence of
vector bundles on a smooth irreducible projective curve $C$. Then
$$\mu_{\min}(E) \,\geq\, \min\,\bigl\{\mu_{\min}(E'),\, \mu_{\min}(E'')\bigr\}.$$
\end{lem}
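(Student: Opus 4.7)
\emph{Proof proposal.} The plan is to use the characterization of $\mu_{\min}$ on a smooth projective curve as the infimum of $\mu(Q)$ over all nonzero quotient bundles $Q$ of $E$ (this infimum is attained by the last graded piece $E_0/E_1$ of the Harder--Narasimhan filtration). So it suffices to show that every nonzero quotient $Q$ of $E$ satisfies $\mu(Q) \geq \min\{\mu_{\min}(E'),\mu_{\min}(E'')\}$; taking the infimum then yields the lemma.

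Fix a nonzero quotient $q\,:\,E\,\twoheadrightarrow\, Q$. Restricting $q$ to $E'$, let $K\,:=\,q(E')\,\subseteq\, Q$ and let $\widetilde{K}$ be its saturation inside $Q$. Because we are on a smooth curve, $\widetilde{K}$ is a subbundle and $Q/\widetilde{K}$ is locally free. The key observations are: (i) $K$ is a quotient of $E'$ by construction, so if $K\,\neq\, 0$ then $\mu(K)\,\geq\, \mu_{\min}(E')$, and since $\widetilde{K}/K$ is torsion we get $\mu(\widetilde{K})\,\geq\, \mu(K)\,\geq\, \mu_{\min}(E')$; (ii) the induced surjection $E''\,=\,E/E'\,\twoheadrightarrow\, Q/K$ further projects to $Q/\widetilde{K}$, so if $Q/\widetilde{K}\,\neq\, 0$ then $\mu(Q/\widetilde{K})\,\geq\, \mu_{\min}(E'')$.

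Now combine using the short exact sequence
\begin{equation*}
0\,\longrightarrow\, \widetilde{K}\,\longrightarrow\, Q\,\longrightarrow\, Q/\widetilde{K}\,\longrightarrow\, 0,
\end{equation*}
which gives $\mu(Q)$ as a weighted average (with positive weights $\rank\widetilde{K}$ and $\rank(Q/\widetilde{K})$) of $\mu(\widetilde{K})$ and $\mu(Q/\widetilde{K})$ whenever both terms are nonzero. Hence $\mu(Q)\,\geq\, \min\{\mu(\widetilde{K}),\,\mu(Q/\widetilde{K})\}\,\geq\,\min\{\mu_{\min}(E'),\mu_{\min}(E'')\}$. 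The two degenerate cases $\widetilde{K}\,=\,0$ (then $Q$ is a quotient of $E''$) and $\widetilde{K}\,=\,Q$ (then $Q$ has the same rank as $K$, so $\mu(Q)\,\geq\, \mu(K)\,\geq\,\mu_{\min}(E')$) are handled directly by (i) and (ii).

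The only subtle point, and the step I would be most careful with, is the saturation argument: one must pass from $K$ to $\widetilde{K}$ to ensure that $Q/\widetilde{K}$ is torsion-free (so that $\mu_{\min}(E'')$ applies via a genuine vector-bundle quotient) while simultaneously checking that saturation can only \emph{raise} $\mu(K)$, so the lower bound from $E'$ is preserved. Everything else is just the elementary fact that the slope of an extension is a weighted mean of the slopes of the sub and quotient.
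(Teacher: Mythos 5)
Your proof is correct. Note that the paper itself does not prove this lemma at all --- it is imported verbatim as \cite[Lemma 3.31]{FM21} --- so there is no internal argument to compare against; what you have written is a complete, self-contained proof of the cited fact. Your route is the standard one: reduce to bounding $\mu(Q)$ for an arbitrary nonzero quotient $Q$ of $E$, split $Q$ along the saturation $\widetilde{K}$ of $q(E')$, bound $\mu(\widetilde{K})$ below by $\mu_{\min}(E')$ (using that saturation on a smooth curve only adds a torsion subquotient, hence can only increase the degree at fixed rank) and $\mu\bigl(Q/\widetilde{K}\bigr)$ below by $\mu_{\min}(E'')$ (since it is a locally free quotient of $E''$), and finish with the weighted-mean identity for slopes in a short exact sequence. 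The two degenerate cases $\widetilde{K}=0$ and $\widetilde{K}=Q$ are correctly disposed of, and the point you flag as subtle --- that one must saturate to get a torsion-free quotient of $E''$ while preserving the lower bound coming from $E'$ --- is exactly the right thing to be careful about and is handled correctly. The only cosmetic remark: you could note explicitly that $K=q(E')$, being a subsheaf of the locally free $Q$ on a smooth curve, is automatically locally free, so $\mu(K)$ makes sense and the inequality $\mu(K)\geq\mu_{\min}(E')$ applies to it as a genuine quotient of $E'$ of positive rank.
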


\section{Ein-Lazarsfeld inequality for vector bundles}\label{sect3}

In this section, we give one instance of vector bundles for which the Ein--Lazarsfeld inequality
\eqref{iel} holds for arbitrary points. We give several 
examples to illustrate this result.

\begin{prop}\label{prop1}
Let $X$ be a smooth complex irreducible projective variety and $L$ an ample and globally generated line bundle on $X$. Let $E$ be an
ample vector bundle on $X$ such that $E\otimes L^{-1}$ is nef. Then 
$$\varepsilon(X,E,\,x) \,\geq\, 1$$ for all $x\,\in\, X$.
\end{prop}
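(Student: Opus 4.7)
The plan is to use the curve-by-curve characterization of Seshadri constants recalled in \eqref{curves}. For any irreducible curve $C\subset X$ through $x$ with normalization $\nu : C'\longrightarrow X$, we must show
$$\frac{\mu_{\min}(\nu^{\star}E)}{\mult_x C}\,\geq\, 1,$$
i.e.\ $\mu_{\min}(\nu^{\star}E)\,\geq\, \mult_x C$.

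First, I would use the tensor decomposition $\nu^{\star}E\,=\,\nu^{\star}(E\otimes L^{-1})\otimes \nu^{\star}L$. Since $E\otimes L^{-1}$ is nef on $X$, its pullback $\nu^{\star}(E\otimes L^{-1})$ is nef on the smooth curve $C'$; a nef bundle on a smooth projective curve has $\mu_{\min}\,\geq\, 0$ (every quotient has non-negative degree). Tensoring with a line bundle shifts the slope of every quotient by $\deg(\nu^{\star}L)$, so
$$\mu_{\min}(\nu^{\star}E)\,=\,\mu_{\min}\bigl(\nu^{\star}(E\otimes L^{-1})\bigr) + \deg(\nu^{\star}L)\,\geq\, \deg(\nu^{\star}L)\,=\, L\cdot C.$$

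Next, I would invoke the standard fact that for an ample and globally generated line bundle $L$, one has $L\cdot C\,\geq\, \mult_x C$ for every irreducible curve $C$ through $x$. The proof is as follows: since $L$ is globally generated it induces a morphism $\varphi_L : X\longrightarrow \mathbb{P}^N$; since $L$ is in addition ample, $\varphi_L$ is finite, so $\varphi_L(C)$ is a curve and we can choose a hyperplane $H\subset \mathbb{P}^N$ passing through $\varphi_L(x)$ but not containing $\varphi_L(C)$. The pullback $\varphi_L^{-1}(H)\,\in\, |L|$ is a divisor meeting $C$ properly and with multiplicity at least $\mult_x C$ at $x$, giving $L\cdot C\,\geq\, \mult_x C$.

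Combining the two inequalities yields $\mu_{\min}(\nu^{\star}E)\,\geq\, \mult_x C$, and taking the infimum over all irreducible curves through $x$ gives $\varepsilon(X,\,E,\,x)\,\geq\, 1$. There is no real obstacle: the argument is essentially a slope computation on the normalization, bootstrapped from the well-known line bundle case via the nefness of $E\otimes L^{-1}$. The mildly delicate step is keeping track of the fact that $\mu_{\min}$ is additive under twisting by a line bundle on a curve and that nefness on a curve is equivalent to $\mu_{\min}\geq 0$; both are standard.
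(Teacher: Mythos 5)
Your argument is correct, and it rests on exactly the same decomposition as the paper's proof, namely writing $E=(E\otimes L^{-1})\otimes L$ and exploiting the nefness of the first factor together with the ample-and-globally-generated hypothesis on the second. The difference is one of packaging: the paper disposes of the proposition in one line by citing the superadditivity of Seshadri constants under tensor product, $\varepsilon(X,E_1\otimes E_2,\,x)\,\geq\,\varepsilon(X,E_1,\,x)+\varepsilon(X,E_2,\,x)$ from \cite[Lemma 3.28]{FM21}, combined with $\varepsilon(X,E\otimes L^{-1},\,x)\,\geq\,0$ for a nef bundle and $\varepsilon(X,L,\,x)\,\geq\,1$ for an ample globally generated line bundle. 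You instead unwind that superadditivity by hand through the curve-by-curve formula \eqref{curves}: the additivity of $\mu_{\min}$ under twisting by a line bundle on the normalization, the fact that a nef bundle on a smooth curve has $\mu_{\min}\geq 0$, and the hyperplane-section argument giving $L\cdot C\,\geq\,\mult_xC$. All of these steps check out (in particular, pullback of a nef bundle under the non-constant map $\nu$ is nef, and $\deg(\nu^{\star}L)=L\cdot C$ by the projection formula), so your version is a self-contained, more elementary proof, at the cost of reproving on curves what \cite[Lemma 3.28]{FM21} packages as a general statement about Seshadri constants.
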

 
\begin{proof}
Using \cite[Lemma 3.28]{FM21}, we have for every point $x\,\in\, X$,
$$\varepsilon(X,E,\,x)\,=\, \varepsilon\bigl(X,(E\otimes L^{-1})\otimes L,\,x\bigr)
\,\geq\, \varepsilon(X,E\otimes L^{-1},\,x) + \varepsilon(X,L,\,x) \,\geq\, 1.$$
\end{proof}
 We ask the following question --- which is a
variation of our motivating question --- in the particularly simple case of $X \,= \,\mathbb{P}^n$.

\begin{question}\label{qn1}
Is it true that for an ample vector bundle $E$ on $\mathbb{P}^n$,
$$\varepsilon(\mathbb{P}^n,\,E,x) \,\geq\, 1$$
for all $x\,\in\, \mathbb{P}^n$?
\end{question}

Question \ref{qn1} has a positive answer if $E$ is a direct sum of line bundles. Note that Hartshorne's
conjecture on vector bundles says that any rank two vector bundle over a projective space $\mathbb{P}^n$ with $n\geq7$ splits into the direct sum of two line bundles; see \cite[Conjecture 6.3]{Har74}. In general, Question \ref{qn1} is difficult to answer for non-split vector bundles even for rank two.

\begin{defi}
A vector bundle $E$ on a smooth projective variety $X$ is called \textit{Fano} if $\mathbb{P}(E)$ is a Fano variety,
meaning the anti-canonical line bundle $-K_{\mathbb{P}(E)}$ is ample.
\end{defi}

There are classifications of Fano bundles of rank two on smooth surfaces and on 3-folds in \cite{SW290} and \cite{SW190}
respectively. We will now consider some examples of non-split Fano bundles of rank two on projective varieties.

\begin{exm}\label{ex3.4}
Let $E$ be the rank two vector bundle on $\mathbb{P}^2$ which fits in the exact sequence
$$0\,\longrightarrow\, \mathcal{O}_{\mathbb{P}^2}\,\longrightarrow\, E \,\longrightarrow\, \mathcal{I}_{t}
\,\longrightarrow\, 0,$$
where $\mathcal{I}_t$ denotes the ideal sheaf of a point $t\in\mathbb{P}^2$. Note that $E$ is a semistable bundle
but it is not stable. We will show that $E(1)$ is nef, but $E(1)$ is not ample.

Note that $\mathcal{I}_t(1)$ is globally generated and hence nef. Thus being the extension of two nef sheaves, $E(1)$ is also nef.
Also $\mathcal{I}_t(1)$ restricts trivially to any line in $\mathbb{P}^2$ through $t$. Therefore $E(1)$ is not ample.
Consequently, Proposition \ref{prop1} says that
$$\varepsilon\bigl(\mathbb{P}^2,E(k),\,x\bigr)\,\geq \,1$$ for all $x\,\in\, \mathbb{P}^2$ and for all $k\,\geq\, 2$.
\end{exm}

In Example \ref{exm3.9}, we generalize Example \ref{ex3.4} to the ideal sheaf of five points in general position in $\mathbb{P}^2$. 

\begin{exm}\label{exm2}
Consider the rank two vector bundle $E$ on $\mathbb{P}^2$ fitting in the exact sequence
$$0\,\longrightarrow\, \mathcal{O}_{\mathbb{P}^2}(-1)^2\,\longrightarrow \,\mathcal{O}_{\mathbb{P}^2}^{\oplus4}
\,\longrightarrow\, E(1) \,\longrightarrow\, 0.$$
We note that $E$ is stable. The vector bundle $E(1)$ is globally generated and hence nef. But $E(1)$ is not ample as
there are lines $L$ in $\mathbb{P}^2$ such that $E(1)\big\vert_L \,=\, \mathcal{O}_{L}\oplus \mathcal{O}_{L}(2)$; see 
\cite[Page 298]{SW290}. 

Thus $E(k)$ is ample if and only if $k \ge 2$ and 
$$\varepsilon\bigl(\mathbb{P}^2,E(k),\,x\bigr) \,\geq\, 1$$ for all $x\,\in \,\mathbb{P}^2$ and all $k\,\geq\, 2$.
\end{exm}

\begin{exm}\label{exm3}
Consider the rank two vector bundle $E$ on $\mathbb{P}^2$ defined by
the exact sequence
$$0\,\longrightarrow\, \mathcal{O}_{\mathbb{P}^2}(-2)\,\longrightarrow\,
\mathcal{O}_{\mathbb{P}^2}^{\oplus 3} \,\longrightarrow\, E(1) \,\longrightarrow\, 0.$$
In this case $E$ is stable.
The vector bundle $E(1)$ is globally generated, see \cite[Proposition 2.1, Proposition 2.6]{SW290}. 
However $E(1)$ is not ample: One can show that $E(1)$ sits in an exact sequence of the form
$$ 0\to \mathcal{O}_{\mathbb{P}^2} \to E(1) \to \mathcal{I}(2) \to 0,$$ where
$\mathcal{I}$ is the ideal sheaf of four points in general position in $\mathbb{P}^2$. 
If $L$ is a line through two of these four points, then one can then check that
$E(1)\big\vert_L \,=\, \mathcal{O}_{L}\oplus \mathcal{O}_{L}(2)$; see also \cite[5.2]{B2}. So $E(1)$ is not ample. 

Thus $E(k)$ is ample if and only if $k \ge 2$ and 
$$\varepsilon\bigl(\mathbb{P}^2,E(k),\,x\bigr)\,\geq\, 1$$ for all $x\,\in\, \mathbb{P}^2$ and all $k\,\geq\, 2$.
\end{exm}

\begin{exm}\label{exm4}
Consider the null correlation bundle $\mathcal{N}$ on $\mathbb{P}^3$. We have the exact sequence
$$0\,\longrightarrow \,\mathcal{N} \,\longrightarrow\, T_{\mathbb{P}^3}(-1)\,\longrightarrow\,
\mathcal{O}_{\mathbb{P}^3}(1)\,\longrightarrow\, 0.$$ Then $\mathcal{N}$ is stable. The vector bundle
$\mathcal{N}(1)$ is globally generated (see \cite[Theorem 2.1]{SW190}). Further, it can be shown that $\mathcal{N}(1)$ is not ample; see \cite[7.3]{B1}. 

Hence $\mathcal{N}(k)$ is ample if and only if $k \ge 2$ and 
$$\varepsilon(\mathbb{P}^3,\mathcal{N}(k),\,x)\,\geq\, 1$$ for all $x\,\in\, \mathbb{P}^3$ and for all $k\geq 2$.
\end{exm}

\begin{exm}
Next consider the rank two stable vector bundle $E$ on $\mathbb{P}^1\times \mathbb{P}^1$ which
fits in the exact sequence
$$0\,\longrightarrow\, \mathcal{O}_{\mathbb{P}^1\times \mathbb{P}^1}(-1,-1)
\,\longrightarrow\, \mathcal{O}_{\mathbb{P}^1\times \mathbb{P}^1}^{\oplus 3}\,\longrightarrow\, E(1,1)\,\longrightarrow\, 0.$$
The vector bundle $E(1,\,1)$ is globally generated, and hence nef (see \cite[Proposition 3.11]{SW290}).
Hence $$\varepsilon\bigl(\mathbb{P}^1\times \mathbb{P}^1,E(2,\,2),\,x\bigr) \,\geq\, 1$$ for all $x\,\in\, \mathbb{P}^1\times \mathbb{P}^1$.
\end{exm}
 Based on the observations about Fano bundles in this section, the following question may be
more tractable.

\begin{question}\label{qn2}
Is it true that for an ample Fano vector bundle $E$ on $\mathbb{P}^n$,
$$\varepsilon(\mathbb{P}^n,\,E,x) \,\geq\, 1
$$
for all $x\,\in\, \mathbb{P}^n$?
\end{question}

We now show that the observations in this section settle the above question when the rank of $E$ is two. 

\begin{prop}
If $E$ is an ample Fano vector bundle of rank two on $\mathbb{P}^n$,
then $\varepsilon(\mathbb{P}^n,\,E,x) \,\geq\, 1$
for all $x\,\in\, \mathbb{P}^n$. 
\end{prop}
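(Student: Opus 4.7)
The plan is to invoke the classification of rank two Fano vector bundles on projective spaces and to reduce the argument, in every case, to Proposition \ref{prop1} with the ample globally generated line bundle $L = \mathcal{O}_{\mathbb{P}^n}(1)$.

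First I would dispose of the split case. If $E \cong \mathcal{O}_{\mathbb{P}^n}(a) \oplus \mathcal{O}_{\mathbb{P}^n}(b)$ is ample, then $a,\,b \geq 1$. Using that the Seshadri constant of a direct sum of line bundles at a point equals the minimum of the Seshadri constants of the summands, together with $\varepsilon(\mathbb{P}^n,\mathcal{O}(d),x) = d$, I get $\varepsilon(\mathbb{P}^n,E,x) = \min(a,b) \geq 1$. Alternatively, one may note that $E \otimes \mathcal{O}_{\mathbb{P}^n}(-1) = \mathcal{O}(a-1)\oplus \mathcal{O}(b-1)$ is nef and apply Proposition \ref{prop1} directly.

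For the non-split case I would appeal to the Szurek--Wi\'sniewski classification of rank two Fano bundles on projective spaces: on $\mathbb{P}^2$ (see \cite{SW290}) the non-split examples are, up to twist by a line bundle, exactly those appearing in Examples \ref{ex3.4}, \ref{exm2} and \ref{exm3}; on $\mathbb{P}^3$ (see \cite{SW190}) the only such bundle is, up to twist, the null correlation bundle treated in Example \ref{exm4}; and for $n \geq 4$ every rank two Fano bundle on $\mathbb{P}^n$ splits. In each of these examples it has already been verified that, whenever the bundle is ample, the twist $E \otimes \mathcal{O}_{\mathbb{P}^n}(-1)$ is nef (the key input being that the minimal nef twist of each such bundle is already globally generated or otherwise nef). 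Proposition \ref{prop1} then gives $\varepsilon(\mathbb{P}^n,E,x) \geq 1$ at every point.

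The main obstacle is accepting the completeness of the classification, most notably the assertion that for $n \geq 4$ no non-split rank two Fano bundle exists on $\mathbb{P}^n$; once this is granted, the proof reduces to a short case check using the list of examples already treated in Section \ref{sect3}. A cleaner presentation that sidesteps the $n\geq 4$ input would need a direct numerical argument showing that the Fano condition on $\mathbb{P}(E)$ forces $E \otimes \mathcal{O}_{\mathbb{P}^n}(-1)$ to be nef for any ample rank two bundle $E$ on $\mathbb{P}^n$, but I do not expect such an argument to be straightforward without invoking structural classification results.
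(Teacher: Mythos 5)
Your overall strategy is exactly the paper's: dispose of the split case directly, then invoke the Szurek--Wi\'sniewski classification of normalized rank two Fano bundles for $n=2,3$, reduce to the examples already treated in Section \ref{sect3}, and use \cite[Main Theorem 2.4(1)]{APW94} to rule out non-split bundles for $n\ge 4$. The mechanism you propose in each non-split case (checking that the minimal ample twist $E$ has $E\otimes\mathcal{O}(-1)$ nef and applying Proposition \ref{prop1}) is also how Examples \ref{ex3.4}, \ref{exm2}, \ref{exm3} and \ref{exm4} are handled.

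There is, however, one concrete gap: your list of normalized non-split Fano bundles on $\mathbb{P}^2$ is incomplete. The theorem of \cite{SW290} also includes $T_{\mathbb{P}^2}(-2)$ (note $c_1(T_{\mathbb{P}^2}(-2))=-1$, so it is normalized, and it is not a direct sum of line bundles), and its ample twists $T_{\mathbb{P}^2}(k)$, $k\ge 0$, must be checked as well. This case is easily repaired: the paper treats the tangent bundle by the equivariant restriction argument of Example \ref{exm5}, which gives $\varepsilon(\mathbb{P}^n,T_{\mathbb{P}^n},x)=1$ exactly; alternatively, your own mechanism applies, since the Euler sequence exhibits $T_{\mathbb{P}^2}(-1)$ as a quotient of $\mathcal{O}_{\mathbb{P}^2}^{\oplus 3}$, hence globally generated and nef, so Proposition \ref{prop1} yields $\varepsilon(\mathbb{P}^2,T_{\mathbb{P}^2}(k),x)\ge 1$ for all $k\ge 0$. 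With that case added, your argument is complete and coincides with the paper's proof.
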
 
\begin{proof}
The case $n=1$ is easy: 
every ample vector bundle is a direct sum of ample line bundles and the required inequality clearly holds. 
More generally, the conclusion holds when $E$ is a direct sum of ample line bundles on $\mathbb{P}^n$. 

For $n\,=\,2,\,3$, \cite{SW190,SW290} give a complete
classification of normalized Fano rank two bundles. A vector bundle
$E$ is said to be \textit{normalized} if $c_1(E)$ is either 0 or $-1$.
By twisting any rank 2 vector bundle with a suitable line bundle one may assume that it is normalized. 

For $n=2$, \cite[Theorem]{SW290} shows that 
the only normalized, non-split Fano bundles of rank 2 are the ones studied in 
Examples \ref{ex3.4}, \ref{exm2}, \ref{exm3} and $T_{\mathbb{P}^2}(-2)$, where
$T_{\mathbb{P}^2}$ is the tangent bundle $\mathbb{P}^2$. 
For a discussion on the Seshadri constants of 
the tangent bundle, Example \ref{exm5}. 
In these examples, we showed that the Seshadri constants of 
every ample vector bundle are at least 1. 

For $n=3$, \cite[Theorem 2.1]{SW190} shows that the only 
normalized, non-split Fano bundle of rank 2 is the null correlation bundle $\mathcal{N}$ studied in 
Example \ref{exm4}, where we showed that the Seshadri constants are at least 1 whenever
$\mathcal{N}(k)$ is ample. 

For $n\,\ge\, 4$, \cite[Main Theorem
2.4(1)]{APW94} shows that any Fano vector bundle of rank 2 on $\mathbb{P}^n$ is split. 
\end{proof}

\vspace{2mm}

We now give an example of a vector bundle for which the hypothesis of Proposition \ref{prop1} does
\textit{not} hold, and yet \eqref{iel} holds.

\begin{exm}\label{exm3.9}
Let $Z\,=\,\{p_1,\,\cdots,\, p_5\}$ be five distinct points in $\mathbb{P}^2$ such that no
three of them are collinear. Denote the ideal sheaf of $Z$ by $\mathcal{I}_Z$.
Then there exists a vector bundle $E$ of rank 2 on $\mathbb{P}^2$ which fits in the 
following short exact sequence:
$$
0 \,\longrightarrow\, \mathcal{O}_{\mathbb{P}^2}\,\stackrel{\cdot s}{\longrightarrow}\, E
\,\longrightarrow\, \mathcal{I}_Z \,\longrightarrow\, 0.$$
In other words, there is a section $s\, \in\, H^0(\mathbb{P}^2,\, E)$ whose zero locus is
precisely $Z$. Such vector bundles exist for any finite set of points in $\mathbb{P}^2$; see 
\cite[Section 5.2, Example 1, Page 103]{OSS}. 

We will show that $E(2)$ is not nef while $E(3)$ is ample. 

Let $C$ be the unique conic passing through all the points in $Z$. Restricting the section
$s\, \in\, H^0(\mathbb{P}^2,\, E)$ to $C$, we obtain a short exact sequence: 
$$0 \,\longrightarrow\, \mathcal{O}_{C}(D) \,\longrightarrow\, E\big\vert_C \,\longrightarrow\, \mathcal{O}_C(D')
\,\longrightarrow\, 0,$$ where $\deg(D) =5, \deg(D')=-5$. Tensoring this sequence with
$\mathcal{O}_{\mathbb{P}^2} (2)\big\vert_C$, we obtain the exact sequence
$$ 0 \,\longrightarrow\, \mathcal{O}_{C}(F) \,\longrightarrow\, E(2)\big\vert_C \,\longrightarrow\,
\mathcal{O}_C(F') \,\longrightarrow\, 0,$$
where $\deg(F) = 9, \deg(F') =-1$.
Since $E(2)\big\vert_C$ has a quotient bundle of negative degree, it is not nef.
Consequently, $E(2)$ is not a nef line bundle on $\mathbb{P}^2$. 

The vector bundle $E(3)$ will be shown to be ample using the Seshadri criterion
for ampleness of vector
bundles. We will in fact show that
$\varepsilon(\mathbb{P}^2,E(3),\,x) \,\geq\, 1$ for all $x\,\in \,\mathbb{P}^2$. This
would show that $E(3)$ is ample, by \cite[Theorem 3.11]{FM21}.

The description of Seshadri constant of $E$ at $x$ recalled in \eqref{curves} says 
$$\varepsilon(\mathbb{P}^2, E(3),\,x) \,= \,\,\,\inf\limits_{x\in B\subset X}\frac{\mu_{\min}\bigl(\nu^{\star}(E(3))\bigr)}{\mult_xB},$$
where $B\,\subset\, \mathbb{P}^2$ is any irreducible curve, possibly singular, with its
normalization $\alpha\, :\, B'\, \longrightarrow\, B$, and $\nu \,:\, B'
\,\longrightarrow \, \mathbb{P}^2$ is the composition of $\alpha$
with the inclusion map $B\,\hookrightarrow\, \mathbb{P}^2$ ($\mu_{\min}$ was defined
in Section \ref{sect2}).

Writing $E(3)\,=\, E(2) \otimes \mathcal{O}_{\mathbb{P}^2} (1)$, we have the following equality: 
$$\varepsilon(\mathbb{P}^2,E(3),\,x)\,=\,\,\, \inf\limits_{x\in B\subset X}\Bigl\{\frac{\mu_{\min}\bigl(\nu^{\star}(E(2)\otimes \mathcal{O}(1))\bigr)}{\mult_xB}\Bigr\}.$$
 By \cite[Lemma 3.28]{FM21},
$$\mu_{\min}\bigl(\nu^{\star}(E(2)\otimes \mathcal{O}(1))\bigr)\,=\,
\mu_{\min}(\nu^{\star}(E(2)))+\mu_{\min}(\nu^{\star}(\mathcal{O}(1)))\,=\,
\mu_{\min}(\nu^{\star}(E(2))) + \deg(B).$$
So
$$\varepsilon(\mathbb{P}^2,E(3),\,x)\,=\,\,\, \inf\limits_{x\in B\subset X}
\Bigl\{\frac{\mu_{\min}(\nu^{\star}(E(2)))}{\mult_xB}+\frac{\deg(B)}{\mult_xB}\Bigr\}.$$

We claim that 
for every curve $B \,\subset\, \mathbb{P}^2$ 
containing $x$ which is not equal to the conic $C$, 
one has the inequality $$\mu_{\min}(\nu^{\star}(E(2))) \,\ge\, 0.$$

Since $\deg(B) \,\ge\, \mult_xB$ for all curves $B$ passing through $x$, this shows that the contribution of any curve $B\ne C$ to $\varepsilon(\mathbb{P}^2,E(3),\,x)$ is at least 1. We will also show that the contribution of $C$ to $\varepsilon(\mathbb{P}^2,E(3),\,x)$ is at least 1.

Denote $m_i\,=\, \text{mult}_{p_i}(B)$ for $1 \,\le\, i \,\le\, 5$, and also denote
$d\,=\, \deg(B)$. Then restricting the section $s$ of $E$ to $B$ we obtain the short exact sequence
$$
0\, \longrightarrow\, \mathcal{O}_B(R)\, \longrightarrow\, E(2)\big\vert_B
\, \longrightarrow\, \mathcal{O}_B(R')\, \longrightarrow\, 0,$$
where $\deg(R)=2d+\sum\limits_{i=1}^5 m_i$ and $\deg(R') = 2d-\sum \limits_{i=1}^5m_i$.

Pulling this sequence back to the normalization $B'$ we obtain 
$$0 \, \longrightarrow\, \mathcal{O}_{B'}(P)\, \longrightarrow\, \nu^{\star}(E(2))
\, \longrightarrow\, \mathcal{O}_{B'}(P')\, \longrightarrow\, 0,$$
where $\deg(P) = 2d+\sum\limits_{i=1}^5m_i$ and $\deg(P') = 2d-\sum \limits_{i=1}^5m_i$.

It is easy to see that $$\mu_{\min}(\nu^{\star}(E(2)) \,=\, 2d-\sum m_i.$$

If $B\,=\,C$ is the unique conic passing through $p_1,\, p_2,\, \cdots,\, p_5$, then 
its contribution to the Seshadri constant $\varepsilon(\mathbb{P}^2,E(3),\,x)$ is given by
$\frac{-1}{1}+\frac{2}{1} \,=\, 1$.

It can be shown that $2d\,\ge\, \sum m_i$ for every curve $B \,\neq\, C$ passing through $x$. 
This actually follows easily from B\'ezout's theorem. Indeed, since $B\,\neq\, C$, 
we have $2d \,=\, B\cdot C \,\ge \,\sum m_i$.

This shows that $\varepsilon(\mathbb{P}^2,E(3),\,x) \,\ge\, 1$ for all $x \,\in\, \mathbb{P}^2$, as required.

Further, note that $\varepsilon(\mathbb{P}^2, E,\,x) \,=\, 1$ for any point $x$ belonging to $C$ or to any of
the ten lines which pass through any two of the points $p_1,\,p_2,\, \cdots,\, p_5$. 
\end{exm}

\begin{exm}\label{exm5}
Let $E$ be an equivariant ample vector bundle of rank $r$ on $\mathbb{P}^n$. Recall that there
are only finitely many torus equivariant curves in $\mathbb{P}^n$, say $l_1,\,l_2,\,\cdots,\,l_m$, where
$m\,=\, \frac{n(n+1)}{2}$. Then by \cite[Remark 4.2]{DKS22}, for any point $x\,\in\, \mathbb{P}^n$,
$$\varepsilon(\mathbb{P}^n,\,E,\,x) \,\geq\,\,\, \min\limits_{1\leq i\leq m}\Bigl\{\mu_{\min}(E\vert_{l_i})\Bigr\}.$$

As $E$ is ample, the restriction $E\big\vert_{l_i}$ is also ample. Hence $\varepsilon(E\big\vert_{l_i},\,y)
\,=\, \mu_{\min}(E\big\vert_{l_i})$ for every point $y\,\in\, l_i$. As each $l_i$ is isomorphic to
$\mathbb{P}^1$, we conclude that $\mu_{\min}(E\big\vert_{l_i})\,\geq\, 1$ for every $i$. Thus
$$
\varepsilon(\mathbb{P}^n,\,E,\,x) \,\geq\,\,\, \min\limits_{1\leq i\leq m}\Bigl\{\mu_{\min}
(E\vert_{l_i})\Bigr\} \,\geq\, 1, 
$$
for every $x\,\in\, \mathbb{P}^n$.

For example, consider the tangent bundle $T_{\mathbb{P}^n}\, \longrightarrow\, \mathbb{P}^n$
for $n\,\geq\, 2$. Its restriction to any line $l\,\subset\, \mathbb{P}^n$
is given by 
$$T_{\mathbb{P}^n}\big\vert_l\, =\, \mathcal{O}_{\mathbb{P}^1}(2)\oplus \mathcal{O}_{\mathbb{P}^1}(1)\oplus \cdots \oplus \mathcal{O}_{\mathbb{P}^1}(1),$$
so that $$\mu_{\min}(T_{\mathbb{P}^n}\big\vert_l)\, =\,\, \min\limits_{1\leq i\leq m}\Bigl\{\mu_{\min}(T_{\mathbb{P}^n}\big\vert_{l_i})\Bigr\}
$$
for every line $l\,\subset\, \mathbb{P}^n$.
Hence by \cite[Theorem 4.1]{DKS22} we conclude that for any point $x\,\in \,\mathbb{P}^n$ 
$$\varepsilon(\mathbb{P}^n,\,T_{\mathbb{P}^n},\,x)\, =\,\,\,
\min\limits_{1\leq i\leq m} \Bigl\{\mu_{\min}(T_{\mathbb{P}^n}\big\vert_{l_i})\Bigr\} \,=\, 1.$$

Note that $T_{\mathbb{P}^n}(k)$ is ample if and only if $k \ge 0$. 
\end{exm}

\begin{exm}
More generally, Ein-Lazarsfeld inequality \eqref{iel} holds for torus-equivariant ample vector bundles at 
\textit{torus fixed} points, on: (a)\, toric varieties \cite{HMP10}, (b)\, flag
varieties \cite{BHN20} and (c)\, 
wonderful compactifications as well as Bott-Samelson varieties \cite{BHK22}. In all these cases, 
Seshadri constant at any torus fixed point can be computed by restricting to invariant curves 
which are smooth rational. Hence the restricted bundle is a direct sum of ample line bundles, and its 
Seshadri constant is the minimum degree of a line bundle appearing in the decomposition over all 
invariant curves. Since the degree is a positive integer, it follows that the Seshadri constant
is at least 1.
\end{exm}

\section{Lower bounds for Seshadri constants of ample vector bundles}\label{sect4}

We begin this section by showing that Seshadri constants of ample vector bundles can be 
arbitrarily small. This is an analogue of a similar result of Miranda for line bundles; see 
\cite{EL93}.

\begin{thm}
Fix any positive real number $\delta \,>\,0$ and a positive integer $n$. Then there is a smooth
projective variety $X$ of dimension $n$ and an ample vector bundle $E$ on $X$ such that
$$ \varepsilon(X,E,\,x) \,\,< \,\,\delta$$ for all $x\,\in\, X$.
\end{thm}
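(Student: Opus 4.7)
The plan is to reduce to the case of curves, where Hacon's formula \eqref{hacon} gives $\varepsilon(X,E,x) = \mu_{\min}(E)$, and then lift to higher dimensions via a product construction. The crucial input is that on an elliptic curve $C_0$, for every integer $r \geq 1$ there exists a stable vector bundle $E_0$ of rank $r$ and degree $1$ (by Atiyah's classification, since $\gcd(r,1)=1$); such $E_0$ is ample because $\mu_{\min}(E_0) = 1/r > 0$, so every quotient has strictly positive degree.

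Given $\delta > 0$ and $n$, I would first fix an integer $r > 1/\delta$ and let $E_0$ be such a stable rank $r$, degree $1$ bundle on $C_0$. For $n = 1$, simply set $X := C_0$ and $E := E_0$; Hacon's formula immediately gives $\varepsilon(X,E,x) = 1/r < \delta$ for every $x$. For $n \geq 2$, take
$$X \,:=\, C_0 \times \mathbb{P}^{n-1}, \qquad E \,:=\, p_1^{\star} E_0 \otimes p_2^{\star} \mathcal{O}_{\mathbb{P}^{n-1}}(1),$$
where $p_1, p_2$ denote the two projections. Then $X$ is a smooth projective variety of dimension $n$.

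Next I would verify ampleness of $E$. Since projectivization is insensitive to twisting by a line bundle, $\mathbb{P}(E) \cong \mathbb{P}(p_1^{\star} E_0) \cong \mathbb{P}(E_0) \times \mathbb{P}^{n-1}$, and under this identification the tautological line bundle $\mathcal{O}_{\mathbb{P}(E)}(1)$ becomes $q_1^{\star}\mathcal{O}_{\mathbb{P}(E_0)}(1) \otimes q_2^{\star}\mathcal{O}_{\mathbb{P}^{n-1}}(1)$. This is ample as an external tensor product of two ample line bundles on a product of projective varieties, so $E$ is ample. The twist by $p_2^{\star}\mathcal{O}(1)$ is essential, because the untwisted pullback $p_1^{\star} E_0$ is only nef (it is trivial on every $\mathbb{P}^{n-1}$-fiber).

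Finally, to bound the Seshadri constant at an arbitrary point $x = (c,y) \in X$, I would use the smooth curve $B := C_0 \times \{y\}$ through $x$. Since $p_2|_B$ is constant and $p_1|_B$ is an isomorphism onto $C_0$, the restriction $E|_B$ is isomorphic to $E_0$, which gives $\mu_{\min}(E|_B) = 1/r$; moreover $\mult_x B = 1$. The curve formula \eqref{curves} then yields
$$\varepsilon(X,E,x) \,\le\, \frac{\mu_{\min}(E|_B)}{\mult_x B} \,=\, \frac{1}{r} \,<\, \delta.$$
The only non-routine step is the verification of ampleness of the twisted pullback $E$; the rest (existence of $E_0$, computation of $E|_B$, and the Seshadri bound) is routine once Atiyah's classification and the formula \eqref{curves} are invoked.
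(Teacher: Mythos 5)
Your proposal is correct and follows essentially the same route as the paper: the paper also takes $X = C\times Y$ with $C$ a curve (of genus $\ge 2$ rather than elliptic, and $Y$ arbitrary of dimension $n-1$ rather than $\mathbb{P}^{n-1}$), sets $E = p_1^{\star}V_r\otimes p_2^{\star}L$ with $V_r$ stable of rank $r$ and degree $1$, and bounds the Seshadri constant by restricting to the horizontal curve $C\times\{y\}$. Your explicit verification that the twisted pullback is ample (via $\mathbb{P}(E)\cong\mathbb{P}(E_0)\times\mathbb{P}^{n-1}$) is a welcome addition, as the paper leaves that step implicit.
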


\begin{proof}
Let $r$ be a positive integer such that $0\,<\,\frac{1}{r}\,<\,\delta$. Fix a smooth projective curve 
$C$ of genus $g\,\geq\, 2$ and a smooth irreducible projective variety $Y$ of dimension
$n-1\,\geq \,0$. Consider the product $X \,=\, C\times Y$ together with the projections
$$p_1 \,:\, X \,\longrightarrow\, C\ \ \text{ and }\ \ p_2 \,:\, X\,\longrightarrow\, Y.$$

Let $x \,=\, (c,\,y)\,\in\, X$ be an arbitrary point of $X$. Further we choose a very ample
line bundle $L$ on $Y$. Let $V_r$ be a stable vector bundle of rank $r$ on $C$ such that 
$\det(V_r) \,\cong\, \mathcal{O}_C(c)$. The vector $V_r$ is ample because it is stable of
positive degree \cite[p.~520, Corollary 3.10]{Bi}.
Define $E_r \,:= \, p_1^{\star}V_r \otimes p_2^{\star}L$. Take
the smooth curve $B \,:=\, C\times\{y\}\, \stackrel{i}{\hookrightarrow}\, C\times Y\,=\, X$.
Now by definition, $$\varepsilon(X,E_r,\,x) \,\leq\, \mu_{\min}(i^{\star}E_r) \,=
\,\mu(V_r) \,=\, \frac{1}{\rank(V_r)} \,=\, \frac{1}{r} \,<\, \delta.$$
This completes the proof.
\end{proof}

We now prove that Seshadri constants of vector bundles are bounded below by the reciprocal of the 
rank of the bundle, under suitable conditions. This is analogous to the case of vector bundles on 
curves where this lower bound always holds, by \cite[Corollary 3.1]{Hac}. We consider two 
situations where this bound generalizes to the higher rank case and give some examples that 
satisfy the required hypotheses.

\begin{thm}\label{thm2.2}
Let $E$ be an ample vector bundle on a smooth projective variety $X$. Let
\begin{align}\label{seq1}
0 \,=\, E_l \,\subsetneq\, E_{l-1} \,\subsetneq\, E_{l-2}\,\subsetneq\,\cdots\,\subsetneq\,
E_{1} \,\subsetneq\, E_0 \,=\, E
\end{align} 
be the Harder-Narasimhan filtration of $E$ such that $Q_1 = E_0/E_1$ is locally free and $$\mu_{\min}(\nu^{\star}E) \,=\,
\mu(\nu^{\star}Q_1) \,=\, \frac{\det(Q_1)\cdot C}{\rank(Q_1)}$$
for every curve $C\,\subset\, X$, where $\nu\,:\, \widetilde{C}\,\longrightarrow\, X$ is the composition of the
normalization $\widetilde{C}\,\longrightarrow\, C$ with the inclusion map $C\,\hookrightarrow\, X$. Then for any $x\in X$
$$\varepsilon(X,E,\,x)\,=\, \frac{\varepsilon(X,\det(Q_1),\,x)}{\rank(Q_1)}.$$
In particular, if $X$ is a smooth surface, then $$\varepsilon(X,E,\,x) \geq \frac{1}{\rank(E)}$$ for very general points $x\in X$.

In addition, if $E$ is also globally generated vector bundle on the smooth projective variety $X$, then
 $$\varepsilon(X,E,\,x) \geq \frac{1}{\rank(E)}$$ for all points $x\in X$.
\end{thm}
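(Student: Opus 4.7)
The plan is to unwind the curve description of $\varepsilon(X,E,x)$ from \cite[Corollary 3.21]{FM21} and feed the hypothesis on $\mu_{\min}(\nu^{\star}E)$ directly into it. Starting from
$$\varepsilon(X,E,x)\,=\,\inf_{x\in C\subset X}\frac{\mu_{\min}(\nu^{\star}E)}{\mult_xC}$$
and substituting $\mu_{\min}(\nu^{\star}E)=\frac{\det(Q_1)\cdot C}{\rank(Q_1)}$, the factor $\rank(Q_1)$ is constant in $C$, so it can be pulled out of the infimum:
$$\varepsilon(X,E,x)\,=\,\frac{1}{\rank(Q_1)}\inf_{x\in C\subset X}\frac{\det(Q_1)\cdot C}{\mult_xC}.$$
The remaining infimum is precisely the standard curve description of the Seshadri constant of the line bundle $\det(Q_1)$ at $x$, which gives the desired equality. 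The only small point to note is that $\nu^{\star}\det(Q_1)=\det(\nu^{\star}Q_1)$ has degree $\det(Q_1)\cdot C$ as an intersection number on $X$, but this is immediate from the projection formula since $\nu$ is birational onto $C$.

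For the surface case, I would first argue that $\det(Q_1)$ is an ample line bundle. Since $Q_1=E/E_1$ is a locally free quotient of $E$ by assumption, and $E$ is ample, $Q_1$ is ample; therefore its determinant $\det(Q_1)$ is an ample line bundle on the smooth projective surface $X$. The Ein--Lazarsfeld theorem \cite{EL93} then gives $\varepsilon(X,\det(Q_1),x)\geq 1$ for very general $x\in X$. Combined with the main equality and the trivial inequality $\rank(Q_1)\leq \rank(E)$ (since $Q_1$ is a quotient of $E$), this yields
$$\varepsilon(X,E,x)\,\geq\,\frac{1}{\rank(Q_1)}\,\geq\,\frac{1}{\rank(E)}$$
at very general points $x$.

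For the globally generated case, the strategy is the same but with the stronger input replacing Ein--Lazarsfeld. If $E$ is globally generated, then its quotient $Q_1$ is globally generated; and the surjection $\bigwedge^{\rank(Q_1)}H^0(X,Q_1)\otimes\mathcal{O}_X\twoheadrightarrow\det(Q_1)$ induced by the exterior power shows that $\det(Q_1)$ is globally generated. An ample and globally generated line bundle has Seshadri constant at least $1$ at every point (this is the standard observation recalled in Section \ref{sect1}, since every irreducible curve through $x$ has $L\cdot C\geq\mult_xC$ via any section through $x$), so $\varepsilon(X,\det(Q_1),x)\geq 1$ for all $x\in X$, and the main equality finishes the proof.

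The arguments above are essentially routine once the curve formula is in hand, so the theorem should follow cleanly; the main substantive step is simply recognizing that the hypothesis on $\mu_{\min}(\nu^{\star}E)$ was designed precisely to separate the bundle computation into a line-bundle computation (handled by $\det(Q_1)$) and a rank factor. The only minor care required is in verifying ampleness and global generation of $\det(Q_1)$, which depend on $Q_1$ being locally free—this is exactly what the hypothesis stipulates.
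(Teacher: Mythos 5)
Your proposal is correct and follows essentially the same route as the paper: apply the curve description of $\varepsilon(X,E,x)$ from \cite[Corollary 3.21]{FM21}, substitute the hypothesis $\mu_{\min}(\nu^{\star}E)=\det(Q_1)\cdot C/\rank(Q_1)$, pull out the rank factor to identify the remaining infimum with $\varepsilon(X,\det(Q_1),x)$, and then invoke Ein--Lazarsfeld (respectively, global generation of the ample line bundle $\det(Q_1)$) for the lower bounds. The paper's proof is terser and leaves the ampleness and global generation of $\det(Q_1)$ implicit, whereas you spell them out; the substance is identical.
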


\begin{proof}
By \cite[Corollary 3.1]{FM21}, we have
$$\varepsilon(X,E,\,x) \,=\,\,
\inf\limits_{x\in C \subset X}\Bigl\{\frac{\mu_{\min}(\nu^{\star}E)}{\mult_xC}\Bigr\},$$
where the infimum is taken over all irreducible curves $C\,\subset\, X$ through $x$, and $\nu \,:\,
\widetilde{C}\,\longrightarrow\, X$ is composition of normalization map
$\widetilde{C}\,\longrightarrow\, C$ with the inclusion map $C\,\hookrightarrow\, X$.
By hypothesis, we have $$\mu_{\min}(\nu^{\star}E) \,=\, \mu(\nu^{\star}Q_1)
\,=\, \frac{\det(Q_1)\cdot C}{\rank(Q_1)}.$$
This shows that $$\varepsilon(E,\,x) \,=\,\, \frac{1}{\rank(Q_1)}\inf\limits_{x\in C\subset X}
\Bigl\{\frac{\det(Q_1)\cdot C}{\mult_xC}\Bigr\} \,= \,\frac{1}{\rank(Q_1)}\varepsilon(X,\det(Q_1),\,x).$$
Now the result follows from the Ein-Lazarsfeld inequality for Seshadri constants of ample line
bundles; see \cite{EL93}.
\end{proof}

\begin{exm}
A vector bundle $V$ on an abelian variety $X$ is called \textit{weakly-translation invariant} 
(semi-homogeneous in the sense of Mukai) if for every closed point $x \,\in\, X$, there is a line 
bundle $L_x$ on $X$ depending on $x$ such that $T_x^{\star}(V) \,\simeq\, V \otimes L_x$ for all 
$x\in X$, where $T_x\, :\, X\, \longrightarrow\, X$ is the translation morphism $y\, \longmapsto\, y+x$.

Mukai proved that a semi-homogeneous vector bundle $E$ is Gieseker semistable 
(see \cite[Chapter 1]{H-L} for definition) with respect to some polarization and it has projective 
Chern classes zero, i.e., if $c(E)$ is the total Chern class of $E$, then $c(E) \,=\, \Bigl\{ 1+ 
c_1(E)/r\Bigr\}^r$ (see \cite[p.~260, Theorem 5.8]{Muk78}, \cite[Proposition 6.13]{Muk78}; also 
see \cite[Page 2]{MN84}). Gieseker semistability implies slope semistability (see \cite{H-L}). 
So, in particular, $E$ is slope semistable with $$c_2(\End(E))
\,=\, 2rc_2(E) - (r -1)c_1^2(E) \,=\, 0.$$
Thus by \cite[Theorem 1.2]{BB08} $\nu^*E$ is also semistable for every curve $C\subset X$, where $\nu$ is as in Theorem \ref{thm2.2}. Hence for an ample semi-homogeneous vector bundle $E$ of rank $r$ on an abelian surface $X$, we have $$\varepsilon(X,E,\,x)
\, =\, \frac{\varepsilon(X,\det(E),\,x)}{\rank(E)}\,\geq\, \frac{1}{\rank(E)},$$ for all but countable many points $x\,\in\, X$.
\end{exm}

\begin{exm}
Let $\rho \,:\, X \,=\,\mathbb{P}_C(W)\,\longrightarrow\, C$ be a ruled surface on a smooth projective curve $C$ with
$\mu_{\min}(W) \,=\, \deg(W)$. Take a vector bundle $V$ on $C$, and let $m\,\in\, \mathbb{N}$
be such that $\mu_{\min}(V) \,>\, -m\deg(W)$. So $E \,:=\, \rho^{\star}(V)\otimes 
\mathcal{O}_{\mathbb{P}(W)}(m)$ is an ample vector bundle on $X$ (see \cite[Corollary 10]{MR21}).

Let $$
0\,=\,V_l\,\subsetneq\, V_{l-1}\,\subsetneq\,V_{l-2}\,\subsetneq\,\cdots\,\subsetneq\, V_{1}\,\subsetneq\, V_0 \,=\, V$$
be the Harder-Narasimhan filtration of $V$, and let $Q_i\,:=\,V_{i-1}/V_{i}$, $1\, \leq\, i\, \leq\, l$.
Set $E_i \,:= \,(\rho^{\star}V_i)\otimes \mathcal{O}_{\mathbb{P}(E)}(m)$. 
Then the uniqueness of the Harder-Narasimhan filtration imply that
$$ 0\,=\,E_l\,\subsetneq\, E_{l-1}\,\subsetneq\, E_{l-2}\,\subsetneq\,\cdots\,\subsetneq\, E_{1} \,\subsetneq\, E_0\,=\,E$$
is the Harder-Narasimhan filtration of $E$ with respect to any polarization $H$.

We conclude using \cite[Corollary 10]{MR21} that each $R_i\,:= E_{i-1}/E_i = \,\rho^{\star}(Q_i)\otimes \mathcal{O}_{\mathbb{P}(W)}(m)$ is an ample semistable
bundle on $X$ with $c_2(\End(R_i)) \,=\,0$. Thus $$\varepsilon(X,R_i,\,x) = \frac{1}{\rank(R_i)}\varepsilon(X,\det(R_i),x) \,\,\geq \,\,\frac{1}{\rank(R_i)}$$
for all but countably many points $x\,\in\, X$. So 
inductively we have each $E_i$ is ample, and by \cite[Lemma 3.2]{FM21} we have $$\varepsilon(X,E_i,\,x) \,\geq \,
\min\Bigl\{\varepsilon(X,E_{i-1},\,x),\, \varepsilon(X,R_i,\,x)\Big\} \,\geq\, \frac{1}{\rank(E_i)}$$
for all but countably many points $x\,\in\, X$.
In particular, using Theorem \ref{thm2.2},
$$\varepsilon(X,E,\,x) \,= \,\frac{\varepsilon(X,\det(R_1),\,x)}{\rank(R_1)} \,\geq\, \frac{1}{\rank(R_1)}\,\geq \,\frac{1}{\rank(E)}$$
for all but countably many points $x\,\in\, X$.
\end{exm}

\begin{exm}
Let $B$ and $D$ be any two smooth projective curves. Consider the product $X\,=\,B\times D$ together with the projections $p_1\,:
\, X\,\longrightarrow\,C$ and $p_2 \,:\, X\,\longrightarrow\, D$.
 
Let $V$ be an ample vector bundle on $B$ and $L$ a very ample line bundle on $D$. Then $E\,:=\, p_1^{\star}V\otimes p_2^{\star}L$
is an ample vector bundle on $X$. Let 
\begin{align*}
0 \,=\, V_{l} \,\subsetneq\, V_{l-1}\,\subsetneq\, V_{l-2}\,\subsetneq\, \cdots\,\subsetneq\, V_1\,\subsetneq\, V_0 \,=\, V
\end{align*}
be the Harder-Narasimhan filtration of $V$. Then 
\begin{align*}
0 \,=\, E_{l} \,\subsetneq\, E_{l-1}\,\subsetneq\, E_{l-2} \,\subsetneq\, \cdots \,\subsetneq\, E_1\,\subsetneq\, E_0 \,=\, E
\end{align*}
is the Harder-Narasimhan filtration of $E$, where $E_i = p_1^{\star}V_i\otimes p_2^{\star}L$ for each $i$. Note that each
$Q_i \,:= \,E_{i-1}/E_i$ is semistable with $c_2(\End(Q_i))\,=\,0$. In particular $\mu_{\min}(\nu^*E) = \mu(\nu^*Q_1)$ for every curve $C\subset X$, where $\nu$ is as in Theorem \ref{thm2.2}.

Therefore $$\varepsilon(X,E,\,x) \,= \,\frac{\varepsilon(X,\det(Q_1),\,x)}{\rank(Q_1)} \,\geq\, \frac{1}{\rank(Q_1)}
\,\geq \,\frac{1}{\rank(E)}$$
for all but countably many points $x\,\in\, X$.
\end{exm}

\begin{exm}
Let $\pi \,:\, X \,:=\, \mathbb{P}_C(\mathcal{F})\,\longrightarrow\, C$ be a projective bundle over an elliptic curve $C$, where
$\mathcal F$ is an indecomposable vector bundle of rank $n$ and degree $1$ on $C$. Let
$\mathcal{O}_{\mathbb{P}_C(\mathcal{F})}(1)$ be the tautological line bundle, and set 
$E\,:=\, \pi^{\star}\mathcal{E}\otimes \mathcal{O}_{\mathbb{P}_C(\mathcal{F})}(1)$, where $\mathcal{E}$ is an indecomposable vector
bundle of rank $n$ and degree $1$. Then $E$ is an ample and globally generated vector bundle (see \cite[Theorem 1.1, (4)]{N98}). Further
$\mathcal{E}$ is an indecomposable bundle, and hence semistable. As a result, $E$ is a semistable bundle with $c_2(\End(E))\,=\,0$. Thus
$$\varepsilon(X,E,\,x) \,=\, \frac{\varepsilon(X,\det(E),\,x)}{\rank(E)}\,\geq\, \frac{1}{\rank(E)}$$
for all $x\,\in\, X.$
\end{exm}

\section{ample vector bundles on Hirzebruch Surfaces}\label{sect5}

In this section we consider rank 2 vector bundles on Hirzebruch surfaces. Such bundles with small values of $c_2$ have been classified in the 
literature, see \cite{I} for instance.

In the following proposition, we list the indecomposable rank 2 ample bundles on Hirzebruch 
surfaces with small $c_2$. For all these vector bundles it will be shown that the Seshadri 
constant is at least 1 at all points.

\begin{prop}[{\cite[Theorem 2.9]{I}}]\label{hirzebruch}
Let $\pi \,:\, \mathbb{F}_e \,=\, \mathbb{P}(\mathcal{O}_{\mathbb{P}^1}\oplus \mathcal{O}_{\mathbb{P}^1}(-e)) \,\longrightarrow\,
\mathbb{P}^1$ be a Hirzebruch surface and $E\, \longrightarrow\, \mathbb{F}_e$ an indecomposable ample vector bundle of rank two.
Denote the normalized section of $\pi$ by $\sigma$ and a fiber of $\pi$ by 
$f$. Then $c_2(E)\,\geq\, e+5$. Moreover, if $c_2(E)\,\leq\, e+6$, then one of the following cases 
occur:

\textbf{Case 1:}\, $E$ is an ample indecomposable rank 2 bundle on $\mathbb{F}_{1}$ with $c_2(E)\,=\, 6$ 
which sits in the following exact sequence: $$0\,\longrightarrow\, 2\sigma+2f\,\longrightarrow\, E 
\,\longrightarrow\, \sigma+3f \,\longrightarrow \,0.$$
In this case the Nef cone of $\mathbb{F}_e$ is generated by $\{\sigma+f,\,f\}$.

\textbf{Case 2:}\, $E$ is an ample indecomposable rank 2 bundle on $\mathbb{F}_{0}$ with $c_2(E)\,=\, 
6$ which sits in the following exact sequence: $$0\,\longrightarrow\, 2\sigma\,\longrightarrow\, E 
\,\longrightarrow \,\sigma+3f \,\longrightarrow\, 0.$$
In this case the Nef cone is generated by $\{\sigma,\, f\}$.

\textbf{Case 3:}\, $E$ an ample indecomposable rank 2 bundle on $\mathbb{F}_{1}$ with $c_2(E)\,=\, 
7$ which sits in the following exact sequence: $$0\,\longrightarrow\, 2\sigma+f\,\longrightarrow\, E 
\,\longrightarrow\, \sigma+4f\,\longrightarrow\, 0.$$
In this case the Nef cone of $\mathbb{F}_e$ is generated by $\{\sigma+f,\,f\}.$

\textbf{Case 4:}\, $E$ is an ample indecomposable rank 2 bundle on $\mathbb{F}_{2}$ with $c_2(E)
\,=\, 8$ which sits in the following exact sequence:
$$0\,\longrightarrow\, 2\sigma+4f\,\longrightarrow\, E \,\longrightarrow\, \sigma+4f
\,\longrightarrow\, 0.$$
In this case the Nef cone of $\mathbb{F}_e$ is generated by $\{\sigma+2f,\,f\}$.
\end{prop}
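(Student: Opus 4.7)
The plan is to classify such indecomposable rank-two $E$ via extensions, following the standard approach for rank-two bundles on a ruled surface. Write $\Pic(\mathbb{F}_e) = \mathbb{Z}\sigma \oplus \mathbb{Z}f$ with $\sigma^2 = -e$, $f^2 = 0$, and $\sigma \cdot f = 1$; then $\Nef(\mathbb{F}_e)$ is generated by $f$ and $\sigma + ef$, which already accounts for the nef cones stated in the four cases.

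First, for any indecomposable rank-two $E$ on $\mathbb{F}_e$, I would produce a short exact sequence
\begin{equation*}
0 \longrightarrow M \longrightarrow E \longrightarrow N \otimes \mathcal{I}_Z \longrightarrow 0
\end{equation*}
from a maximal destabilizing line subsheaf $M$ (equivalently, via a section of a suitable twist using the Hartshorne--Serre correspondence). Here $N$ is a line bundle with $\det(E) \equiv M + N$ numerically and $Z \subset \mathbb{F}_e$ is a zero-dimensional subscheme satisfying $c_2(E) = M \cdot N + \ell(Z)$.

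Next I would extract numerical constraints from ampleness of $E$. Restriction to a fiber $f \cong \mathbb{P}^1$ forces $E|_f = \mathcal{O}(a_1) \oplus \mathcal{O}(a_2)$ with $a_i \geq 1$; restriction to the negative section $\sigma$ similarly gives ampleness inequalities. Moreover $N$, being a quotient of $E$ away from $Z$, must be nef, so writing $N = a_N\sigma + b_N f$ yields $a_N \geq 0$ and $b_N \geq e\,a_N$. The indecomposability of $E$ translates into a nonzero extension class in $\operatorname{Ext}^1(N \otimes \mathcal{I}_Z, M)$. Writing $M = a_M\sigma + b_M f$ as well, one obtains
\begin{equation*}
c_2(E) = a_M b_N + a_N b_M - e\,a_M a_N + \ell(Z),
\end{equation*}
and the inequalities above, together with $\ell(Z) \geq 0$, bound this quantity below by $e+5$.

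When $c_2(E) \leq e+6$, only finitely many admissible tuples $(M, N, Z)$ remain, and a direct comparison with the ampleness constraints should single out precisely the four configurations listed; in each case one confirms $\operatorname{Ext}^1(N \otimes \mathcal{I}_Z, M) \neq 0$ via Serre duality on $\mathbb{F}_e$ (so genuine indecomposable extensions exist) and verifies ampleness of the resulting $E$ by a Nakai--Moishezon check on the extremal curves of $\mathbb{F}_e$. The main obstacle will be the combinatorial bookkeeping at this last stage: numerically admissible tuples may still correspond to bundles that necessarily split as a direct sum, and one must distinguish ampleness from mere nefness for the extensions that survive. Organizing the case analysis so that these two failure modes are ruled out simultaneously, separately for each value of $e \in \{0,1,2\}$ relevant to the classification, is where the bulk of the work lies.
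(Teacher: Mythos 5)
First, a point of comparison: the paper does not prove this proposition at all --- it is quoted verbatim from Ishihara \cite[Theorem 2.9]{I}, so there is no internal argument to measure your proposal against. What you have written is an outline for reproving Ishihara's classification from scratch, and while the framework you choose (presenting $E$ as an extension $0 \to M \to E \to N\otimes\mathcal{I}_Z \to 0$, computing $c_2(E) = M\cdot N + \ell(Z)$, and constraining $(M,N,Z)$ by restricting to fibers and to $\sigma$) is indeed the standard and correct one for rank-two bundles on ruled surfaces, the proposal as written has a genuine gap: the two assertions that carry all the content of the theorem --- that the stated inequalities force $c_2(E) \geq e+5$, and that $c_2(E) \leq e+6$ leaves exactly the four listed configurations --- are announced rather than derived, and the inequalities you list do not in fact suffice to derive them. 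For example, on $\mathbb{F}_0$ the tuple $M = N = \sigma + f$, $\ell(Z) = 0$ satisfies every numerical condition you impose ($a_N \geq 1$, $b_N \geq ea_N$, positive degrees on fibers, $\ell(Z)\geq 0$) yet gives $c_2 = 2$, far below $e+5 = 5$. Excluding it requires an input nowhere in your outline, namely that $\operatorname{Ext}^1(N, M) = H^1(M - N) = 0$ on a rational surface whenever $M - N$ is suitably positioned, so that $\ell(Z) = 0$ together with such vanishing forces $E$ to split, contradicting indecomposability; and when $\ell(Z) > 0$ one needs the finer obstruction that an ample $E$ cannot restrict too negatively to curves passing through points of $Z$. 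The bound $c_2 \geq e+5$ is precisely the output of playing these two failure modes against each other, which is the part you defer.

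Two further points. Your reduction ``separately for each value of $e \in \{0,1,2\}$ relevant to the classification'' presupposes the conclusion: the statement asserts that no indecomposable ample rank-two bundle with $c_2 \leq e+6$ exists on $\mathbb{F}_e$ for $e \geq 3$, and nothing in your outline bounds $e$. Also, the quotient condition you extract is weaker than what is available and needed: for $E$ ample one gets $N\cdot C > 0$ for \emph{every} irreducible curve $C$ (not merely $N$ nef), since the torsion-free quotient of $(N\otimes\mathcal{I}_Z)\vert_C$ is a quotient of the ample bundle $E\vert_C$ of degree at most $N\cdot C$; this strict positivity ($a_N \geq 1$ and $b_N \geq ea_N + 1$) is what makes the enumeration finite and tight. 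In short, the skeleton is right, but the quantitative heart of the classification --- exactly the part for which the paper defers to \cite{I} --- is missing.
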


\begin{thm}\label{hirzebruch2}
Let $\pi \,: \,\mathbb{F}_e \,=\, \mathbb{P}(\mathcal{O}_{\mathbb{P}^1}\oplus \mathcal{O}_{\mathbb{P}^1}(-e))\,\longrightarrow\,\mathbb{P}^1$
be a Hirzebruch surface, and let $E$ be one of the four indecomposable ample vector bundle of rank 2 on $\mathbb{F}_e$
listed in Proposition \ref{hirzebruch}. Then $$\varepsilon(\mathbb{F}_e, E,\,x) \,\ge\, 1$$ for all $x\,\in\, \mathbb{F}_e$.
\end{thm}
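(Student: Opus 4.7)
The plan is to apply the restriction formula \eqref{curves}: it suffices to verify, for each irreducible curve $C\,\subset\, \mathbb{F}_e$ containing $x$, that
$$\mu_{\min}(\nu^{\star}E) \,\geq\, \mult_x C,$$
where $\nu\,:\,\widetilde{C}\,\to\, C\,\hookrightarrow\,\mathbb{F}_e$ is the composition of the normalization with the inclusion. In each of the four cases of Proposition \ref{hirzebruch}, $E$ sits in a short exact sequence $0\,\to\, A\,\to\, E\,\to\, B \,\to\, 0$ of explicit line bundles. Restricting to $C$ and pulling back to $\widetilde{C}$ gives a short exact sequence of line bundles, so Lemma \ref{mu-min} yields
$$\mu_{\min}(\nu^{\star}E) \,\geq\, \min\bigl\{A\cdot C,\, B\cdot C\bigr\}.$$

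I would split into a \emph{generic} and a \emph{special} case based on the numerical class $[C]\,=\,a\sigma + bf$. If $C$ is neither the negative section $\sigma$ nor a fiber of $\pi$, then irreducibility forces $a\,\geq\, 1$, and from $C\cdot\sigma\geq 0$ one has $b\,\geq\, ea$; on $\mathbb{F}_0$ one additionally has $b\,\geq\, 1$. Intersecting $C$ with a smooth curve $D$ through $x$ that is not a component of $C$ bounds $\mult_x C$ from above by $C\cdot D$. Taking $D$ to be the fiber through $x$ yields $\mult_x C\,\leq\, a$; on $\mathbb{F}_0$ one may instead take $D$ in the class $\sigma$ to get $\mult_x C \,\leq\, b$. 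In each of the four cases a direct substitution using the explicit $A$ and $B$ now shows $\min\{A\cdot C,\, B\cdot C\}\,\geq\, \mult_x C$. For instance, in Case 4 on $\mathbb{F}_2$ one has $b\,\geq\, 2a$, so $A\cdot C\,=\, 2b \geq 4a$ and $B\cdot C\,=\, 2a + b \geq 4a$, both at least $a \geq \mult_x C$; the remaining cases are analogous.

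For the special curves $C\,=\,\sigma$ and $C\,=\,f$ we have $\mult_x C\,=\, 1$, and the bound from the exact sequence can degenerate (most seriously in Case 3, where $A\cdot\sigma\,=\,-1$). Here I would compute $E|_C$ directly from ampleness: as an ample rank two bundle on $C\cong\mathbb{P}^1$, Grothendieck's splitting theorem gives $E|_C\,\cong\, \mathcal{O}_{\mathbb{P}^1}(p)\oplus\mathcal{O}_{\mathbb{P}^1}(q)$ with $p,\, q\,\geq\, 1$ and $p+q\,=\, c_1(E)\cdot C$. A short calculation shows $c_1(E)\cdot f\,=\, 3$ in every case (forcing $E|_f\cong \mathcal{O}(1)\oplus\mathcal{O}(2)$), and $c_1(E)\cdot\sigma\,\in\,\{2,\,3\}$ (forcing $\mu_{\min}(E|_\sigma)\,\geq\, 1$). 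Combining the two regimes gives $\varepsilon(\mathbb{F}_e,\, E,\, x)\,\geq\, 1$ for all $x$. The main obstacle, and the reason the ampleness argument is essential, is Case 3: there the subbundle $A\,=\, 2\sigma + f$ is not nef, so the exact-sequence lower bound alone cannot control $\mu_{\min}(E|_\sigma)$, and one really needs the splitting of $E|_\sigma$ forced by the ampleness of $E$.
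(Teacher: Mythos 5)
Your proposal is correct and follows essentially the same route as the paper: restriction to curves via \eqref{curves}, the exact-sequence bound through Lemma \ref{mu-min} combined with B\'ezout estimates on $\mult_x C$ for curves with $a,b>0$, and a separate treatment of the classes $\sigma$ and $f$ using the splitting of the ample bundle $E\big\vert_C$ on $C\cong\mathbb{P}^1$. Your observation that Case 3 genuinely requires the ampleness argument for $C=\sigma$ (since $(2\sigma+f)\cdot\sigma=-1$ makes the exact-sequence bound useless there) matches the paper's handling of that case.
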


\begin{proof}
We will use \cite[Corollary 3.21]{FM21} which allows us to compute the Seshadri constants 
$\varepsilon(\mathbb{F}_e,E,\,x)$ by restricting $E$ to curves on $\mathbb{F}_e$; see \eqref{curves}.
To prove the theorem it suffices to show that 
\begin{eqnarray}\label{ineq}
\frac{\mu_{\min}(\nu^{\star}E)}{\mult_xC} \,\ge\, 1
\end{eqnarray}
for every irreducible curve $C \,\subset\, \mathbb{F}_e$ containing $x$. Suppose
that $C$ is numerically equivalent to $a\sigma+bf$ for some integers $a,\, b$. 

We will prove \eqref{ineq} separately for each of the four cases in Proposition \ref{hirzebruch}. Take 
$x\,\in\,\mathbb{F}_e$.

\textbf{Case 1:}\, Here $e\,=\,1$. Let $C \,\subset\, \mathbb{F}_1$ be an irreducible curve containing $x$. 
Restricting the defining short exact sequence to $C$, we obtain
\begin{eqnarray}\label{exact1}
0\,\longrightarrow\, \mathcal{O}_C(D) \,\longrightarrow\, E\big\vert_C\,\longrightarrow\, \mathcal{O}_C(D')\,\longrightarrow\, 0,
\end{eqnarray}
where $\deg(D) = 2b$ and $\deg(D') = 2a+b$.

Pulling the above sequence back to the normalization $\alpha\, :\, \widetilde{C}\, \longrightarrow\, C$, we obtain
an exact sequence
$$0\,\longrightarrow\, \nu^{\star}(2\sigma+2f)\,\longrightarrow\, \nu^{\star} E \,\longrightarrow\, \nu^{\star}(\sigma+3f )\,\longrightarrow\, 0,$$
where $\nu$ is the composition of $\alpha$ with the inclusion map $C\, \hookrightarrow\, {\mathbb F}_1$.

Then $$\text{degree}(\nu^{\star}(2\sigma+2f)) \,=\, \text{degree}(2\sigma+2f\big\vert_C) \,=\, 2b$$
and $$\text{degree}(\nu^{\star}(\sigma+3f)) \,=\, \text{degree}(\sigma+3f\big\vert_C) \,=\, 2a+b.$$

By \cite[Corollary V.2.18(b)]{Har77}, the only possibilities are $a=1,b=0$, $a=0,b=1$ or $b \ge a > 0$. Further, 
intersecting $C$ with the fiber through $x$, we obtain that $$a \,= \,C\cdot f \,\ge\, {\mult_x C}.$$ 

First, suppose that $a > 0$ and $b> 0$. Then 
using Lemma \ref{mu-min}, we obtain the following:
$$\mu_{\min}(\nu^{\star}E) \,\ge\, \min\{2b,\,2a+b\} \,>\, b \,\ge\, a \,\ge\, {\mult_x C}.$$

If $a\,=\,0$ and $b\,=\,1$, then we claim that $C$ is smooth. 
Indeed, the linear system $\sigma+2f$ is very ample (see \cite[Corollary 
V.2.18]{Har77}) and hence it contains a nonsingular curve $B$ through $x$. Then B\'ezout's theorem gives that
$$C\cdot B \,=\, 1 \,\ge\, {\mult_x C}.$$

By the above calculation, $\mu_{\min}(\nu^{\star}E)\,\ge\,\min\{2,\,1\} \,=\, 1 \,=\,{\mult_x C}$. 

If $a\,=\,1$ and $b\,=\,0$, then $C\,\cong\, \mathbb{P}^1$. In this case,
$E\big\vert_C$ is an ample and globally generated vector bundle of rank 2 on $C$. So 
$$\varepsilon(C, E\big\vert_C,\, x) \,=\, \mu_{\min}(E\big\vert_C)\, \geq \,1 \,=\,{\mult_x C}.$$

We conclude that\eqref{ineq} holds in every case. 

\textbf{Case 2:}\, Here $e\,=\,0$. Let $C \,\subset\, \mathbb{F}_0$ be an irreducible curve containing $x$. 
Restricting the defining short exact sequence to $C$, we obtain
\begin{eqnarray}\label{exact2}
0\,\longrightarrow\, \mathcal{O}_C(F)\,\longrightarrow\, E\big\vert_C \,\longrightarrow\, \mathcal{O}_C(F')\,\longrightarrow\, 0,
\end{eqnarray}
where $\deg(F) \,= \,2b$ and $\deg(F') \,=\, 3a+b$.

If $C \,= \,\sigma$, then $C$ is smooth, and ${\mult_xC}\, = \,1$. Then \eqref{exact2} becomes
$$0\,\longrightarrow\, \mathcal{O}_C \,\longrightarrow\, E\big\vert_C \,\longrightarrow\, \mathcal{O}_C(3)\,\longrightarrow\, 0.$$
Since $E$ is ample, we have $E\big\vert_C$ is ample and globally generated. The proof can be completed exactly as done in the analogous situation in Case 1.

If $C$ is numerically equal to $f$, the argument is identical to the one in Case 1. 

Now assume that $C$ is different from $\sigma,\, f$. Then $a\,>\, 0,\,\, b \,>\, 0$ by \cite[Corollary V.2.18(b)]{Har77}.

Note that through every point $x\, \in\, \mathbb{F}_0$ there are curves numerically equivalent to $\sigma$ and $f$. 
Intersecting $C$ with these curves and using B\'ezout, we conclude that $a \,\ge \, {\mult_x C}$ and 
$b\,\ge\,{\mult_x C}$. 

As above, using Lemma \ref{mu-min} for the exact sequence \eqref{exact1} we obtain 
$$\mu_{\min}(\nu^{\star}E) \,\ge\, \min\{2b,\,3a+b\} \,\ge \,{\mult_x C}.$$
Hence \eqref{ineq} holds. 

\textbf{Case 3:}\, Here $e\,=\,1$. Let $C \,\subset\, \mathbb{F}_1$ be an irreducible curve containing $x$. 
Restricting the defining short exact sequence to $C$ we obtain
\begin{eqnarray}\label{exact3}
0\,\longrightarrow\, \mathcal{O}_C(G) \,\longrightarrow\, E\big\vert_C \,\longrightarrow\, \mathcal{O}_C(G')\,\longrightarrow\, 0,
\end{eqnarray}
where $\deg(G) \,=\, 2b-a$ and $\deg(G') \,=\, 3a+b$.

If $C \,=\, \sigma$, then $C$ is smooth, and ${\mult_xC} \,=\, 1$. Then \eqref{exact3} becomes
$$0\,\longrightarrow\, \mathcal{O}_C(-1) \,\longrightarrow\, E\big\vert_C \,\longrightarrow\, \mathcal{O}_C(3)\,\longrightarrow\, 0.$$
Since $E$ is ample, we have $E\big\vert_C$ is ample and globally generated.
The proof can be completed exactly as in the analogous situation in Case 1.

If $C$ is numerically equal to $f$, the argument is identical to the one in Case 1. 

Now suppose $C$ is different from $\sigma,\, f$. Then, by \cite[Corollary V.2.20(a)]{Har77}, $a\,>\, 0$ and $b \,> \,a$. 
Intersecting $C$ with a fiber through $x$, we obtain that $a \,=\, C\cdot f \,\ge\, {\mult_x C}$. 

We conclude exactly as done in Case 1.

\textbf{Case 4}: The argument is identical to the above cases. So we omit its proof. 
\end{proof}

\begin{xrem}
Let $E$ be an equivariant ample vector bundle of rank $r$ on a Hirzebruch surface $X$. There are certain special torus-invariant curves $D_1$,\, $D_2$,\, $D_1'$ and $D_2'$ in $X$;
see \cite[Subsection 2.3 and Section 5]{DKS22} for more details about $D_1$,\, $D_2$,\, $D_1'$ and $D_2'$.

Set $$\mu_1 \,= \,\mu_{\min}(E\vert_{D_1}), \mu_2 \,=\, \mu_{\min}(E\vert_{D_2}), \mu_1' \,=\, \mu_{\min}(E\vert_{D_1'}) \hspace{2mm} \text{and} \hspace{2mm} \mu_2'
\,=\, \mu_{\min}(E\vert_{D_2'}).$$

Further assume that $E$ satisfies the following property as in \cite[Theorem 5.1]{DKS22}:
$$\mu_1 = \mu_{\min}(E\vert_{D_1}) \,=\, \mu_{\min}(E\vert_{D'_1}) = \mu_1'$$
Then for any $x\,\in\, X$, by \cite[Theorem 5.1]{DKS22} 
the Seshadri constants $\varepsilon(X,E,\,x)$ satisfy the following inequalities :
\begin{center}
$\varepsilon(X,E,\,x) \,\geq\, \min\bigl\{\mu_1,\,\mu_2,\,\mu_2'\bigr\}$, if $x\,\in\, \Gamma_2$
\end{center}
and 
\begin{center}
$\varepsilon(X,E,\,x) \,\geq\, \min\bigl\{\mu_1,\,\mu_2\bigr\}$, if $x\,\notin\, \Gamma_2$.
\end{center}

Here $\Gamma_2$ is a certain curve on $X$; see \cite[Proposition 2.4]{DKS22} for a description of $\Gamma_2$.

Since the invariant curves are isomorphic to $\mathbb{P}^1$, in both of the above cases we have
$$\varepsilon(X,E,\,x) \,\geq\, 1,\, \text{ for all } x\,\in\, X.$$
\end{xrem}

\section*{Acknowledgements}
We are grateful to the referee for a careful reading and the many suggestions which improved the paper. The present
proof of Case 1 in Theorem \ref{hirzebruch2}, which is shorter than the earlier one, is due to the referee. We thank G. V.
Ravindra for several useful discussions. The second and the third authors are partially supported by a grant from
Infosys Foundation. The third author is supported financially by SERB-NPDF fellowship (File no: PDF/2021/00028).

\end{document}